\documentclass[paper=A4,%
parskip=half,
%BCOR=1cm % Mehr Platz am Rand fürs Binden!
draft=false,
abstracton
%firstfoot=false
]{scrartcl}

\addtokomafont{sectioning}{\rmfamily}
\setkomafont{title}{\normalfont\scshape}
\DeclareOldFontCommand{\bf}{\normalfont\bfseries}{\mathbf}
%So that headings are also in serif

%!TEX root = ./master.tex
%%% PACKAGES

% Inernational
\usepackage[ngerman, english]{babel}

% math stuff
\usepackage{
  amsmath,
  amssymb,
  amsthm,
  mathtools,
  latexsym
}

% more arrows
\usepackage{stmaryrd} % for mapsfrom
\usepackage{extarrows} % for xlongrightarrow

% more symbols
\usepackage{upgreek} % upright greek letters

% more flexible enumerations
\usepackage{enumitem}
\setlist[enumerate]{labelwidth=!,wide,labelindent=0pt,itemsep=0pt,label=(\roman*)}
\newcommand{\huyitem}{\normalfont \rmfamily \item \normalfont \itshape}

% for commutative diagrams
\usepackage{tikz-cd}
\usetikzlibrary{babel} % otherwise babel fucks up tikzcd

\usepackage{ellipsis} % right ellipsis when using \dots, include after xltxtra
\usepackage{microtype} % better everything

% Latin modern font choice
\usepackage{lmodern}

% Hyperlinks in references
\usepackage[bookmarks, colorlinks, breaklinks]{hyperref}
\usepackage{cleveref} % \cref, \Cref

\usepackage{multirow}  % for `\multirow` macro

\usepackage{mycommands} 

% Bibliography
%\usepackage[backend=biber, style=alphabetic, isbn=false, url=true, doi=false, maxbibnames=99]{biblatex}
%\renewbibmacro{in:}{}
%\DefineBibliographyStrings{english}{% no p. or pp. before pages
%  page             = {},
%  pages            = {},
%}

% to remove quotes
%\DeclareFieldFormat[article,inbook,incollection,inproceedings,patent,thesis,unpublished]{citetitle}{#1}
%\DeclareFieldFormat[article,inbook,incollection,inproceedings,patent,thesis,unpublished]{title}{#1}

%\addbibresource{masterarbeit-bib.bib}
%\setlength\bibitemsep{\parskip}
% maybe collect bib settings into separate file... another file to track...

%%% TEXT SETTINGS
\definecolor{darkgreen}{rgb}{0.0, 0.2, 0.13}
% link colors
\hypersetup{linkcolor=blue,citecolor=darkgreen,filecolor=black,urlcolor=cyan}

% theorems
\newtheorem{theorem}{Theorem}[section]
\newtheorem{proposition}[theorem]{Proposition}
\newtheorem{lemma}[theorem]{Lemma}
\newtheorem{corollary}[theorem]{Corollary}

\theoremstyle{definition}
\newtheorem{definition}[theorem]{Definition}

\newtheorem{claim}{Claim}[theorem]

\newenvironment{subproof}[1][\proofname]{%
  \begin{proof}[#1]%
}{%
  \end{proof}%
}

\usepackage{chngcntr}
\counterwithin{equation}{section}
\crefformat{equation}{\textup{#2(#1)#3}}
\hyphenation{mo-no-dro-my}

% Abbreviations

\newcommand{\isom}{\simeq}

\DeclareMathOperator{\im}{im}
\newcommand{\wtilde}{\widetilde}

\DeclareMathOperator{\codim}{codim}
%\DeclareMathOperator{\deg}{deg}
%\DeclareMathOperator{\ker}{ker}
%\newcommand{\pr}{\mathrm{pr}}

% Symbols
\newcommand{\leer}{\varnothing} % preferred symbol for the empty set
%\newcommand{\singleton}{\{\mathrm{pt}\}} % may also consider ``{*}''

% Accents
%\newcommand{\closure}[2][3]{ % closure for bigger symblos
%  {}\mkern#1mu\overline{\mkern-#1mu#2}}

% The perfect :=
% \newcommand*{\defeq}{\mathrel{\rlap{
%  \raisebox{0.3ex}{$\m@th\cdot$}}
%  \raisebox{-0.3ex}{$\m@th\cdot$}}
% =}

% Arrows
% \newcommand{\isom}{\xrightarrow{\sim}}

% Brackets
% \newcommand{\dblbrack}[1]{\left[\!\left[#1\right]\!\right]}
 \newcommand{\gen}[1]{\left\langle #1 \right\rangle}
% \newcommand{\floor}[1]{\left \lfloor #1 \right\rfloor}
% \newcommand{\abs}[1]{|#1|}

% Diagrams and Matrices
%\newcommand{\ses}[6]{ % short exact sequence!
%  \begin{tikzcd}[ampersand replacement=\&]
%  #1 \arrow{r} \& {#2} \arrow{r}{#5} \& {#3} \arrow{r}{#6} \& {#4} \arrow{r} \& #1
%  \end{tikzcd}
%}
%\newcommand{\smatrix}[1]{\left(\begin{smallmatrix}#1\end{smallmatrix}\right)} % small matrix
\newcommand{\GGr}{\mathbb G \mathrm r}

\newcommand{\schemeofsurfaces}{\abs{\mc{O}(d)}}

\newcommand{\schemeofsections}[1]{\abs{\mc{O}(#1)}}
\newcommand{\divides}{\mid}

\DeclarePairedDelimiter\floor{\lfloor}{\rfloor}

\setcounter{tocdepth}{1}

\title{Verlinde bundles of families of hypersurfaces and their jumping lines}
\author{Orlando Marigliano}

\begin{document}
\maketitle
\begin{abstract}
	\noindent Verlinde bundles are vector bundles $V_k$ arising as the direct image $\pi_*(\mc L^{\otimes k})$ of polarizations of a proper family of schemes $\pi\colon \mathfrak X \to S$. We study the splitting behavior of Verlinde bundles in the case where $\pi$ is the universal family $\mathfrak X \to \schemeofsurfaces$ of hypersurfaces of degree $d$ in $\mathbb P^n$ and calculate the cohomology class of the locus of jumping lines of the Verlinde bundles
	$V_{d+1}$ in the cases $n=2,3$.  
\end{abstract}

%!TEX root = ./master.tex
\section{Introduction}

Let $\pi\colon \mathfrak X \to S$ be a proper family of schemes with a polarization $\mc L$. For $k\geq 1$, if the sheaf $\pi_*(\mc L^{\otimes k})$ is locally free, we call it the \emph{$k$-th Verlinde bundle} of the family $\pi$. 

For example (\cite{iyer-verlinde}), let $C\to T$ be a smooth projective family of curves of fixed genus. Consider the relative moduli space $\pi\colon \operatorname{SU}(r)\to T$ of semistable vector bundles of rank $r$ and trivial determinant. This family is equipped with a polarization $\Theta$, the determinant bundle. The Verlinde bundles $\pi_*(\Theta^k)$ of this family are projectively flat (\cite{hitchin-verlinde},\cite{witten-verlinde}), and their rank is given by the Verlinde formula.

%Throughout the rest of this work, we assume $n>1$.

%!TEX root = ./master.tex

%\begin{definition} A \emph{line} is an embedding $l\colon \mathbb{P}^1 \to X$ into a projective scheme $X$ such that $l^*\mathcal{O}(1) = \mathcal{O}(1)$.
%\end{definition}

In this article, we study the example of the universal family
$\pi\from \mf X \to \abs{\mc{O}_{\PP^n}(d)}$
of hypersurfaces of degree $d$ in the complex projective space $\PP^n$, with $n>1$. This family comes equipped with the  polarization $\mc L$ given by the pullback of $\mc O(1)$ along the projection map $\mf X \to \PP^n$. For $k\geq 1$, the sheaf $\pushf{\pi}\mc{L}^{\otimes k}$ is locally free, as can be seen by considering the structure sequence of an arbitrary hypersurface of degree $d$ in $\PP^n$. For $k\geq 1$, we denote the $k$-th Verlinde bundle of the family $\pi$ by $V_k$.

To better understand $V_k$ we study its splitting type when restricted to lines in $\schemeofsurfaces$.

%of rank $r^{(k)}\coloneqq \binom{k+n}{n} - \binom{k+n-d}{n}$

%\begin{definition}
%Let $\pi\from \mf X \to \abs{\mc{O}_{\PP^n}(d)}$ be the universal family of hypersurfaces of degree $d$ in $\PP^n$. Let $\mc{L}$ be the restriction to $\mathfrak X$ of the bundle $\mc{O}(1)\boxtimes \mc{O}$ under the inclusion $\mf X \subseteq \PP^n \times \schemeofsurfaces$.
%\end{definition}

% which can be seen as follows. Let the index $I$ range over the tuples of the 
% form $(i_0,i_1,i_2,i_3)$ with $i_j \geq 0$ and $\sum i_j = 4$, and let $x_I$ 
% denote the $I$-th projective coordinate of $\schemeofsurfaces$. For $j=0,\dotsc
% ,3$, let $x_j$ denote the $j$-th coordinate of $\PP^n$. Then the family $\mf{X}
% $ is cut out by the section $\sum_{I} x_{I} x^{I}$ of the line bundle $\mc{O}(4
% ) \boxtimes \mc{O}(1)$ on $\PP^n \times \schemeofsurfaces$.

%\input{verlinde-base-change}

%\begin{definition}Let $k\geq 1$. The $k$-th \emph{Verlinde bundle} of the family $\pi$ is the vector bundle
%$V_k \coloneqq \pushf{\pi}\mc{L}^{\otimes k}$.
%\end{definition}

%\input{verlinde-exact-sequence}

Let $T\subseteq \schemeofsurfaces$ be a line. On $T=\PP^1$, we define the vector bundle $V_{k,T}\coloneqq V_k|_T$. The \emph{splitting type} of $V_{k,T}$ is the unique non-increasing tuple $(b_1,\dotsc, b_{r^{(k)}})$ of size $r^{(k)}\coloneqq \rank V_k$ such that $V_{k,T} \simeq \bigoplus_i \mc O(b_i)$.

The sequence \cref{master-verlinde-sequence} puts constraints on the $b_i$: they are all non-negative and they sum up to $d^{(k)}\coloneqq\deg (V_k)$. The set of such tuples $(b_i)$ can be ordered by defining the expression $(b'_i) \geq (b_i)$ to mean
	\[
		\sum_{i=1}^s b'_i \geq \sum_{i=1}^s b_i \text{ for all $s=1,\dotsc, r$}.
	\]
With this definition, smaller types are more general: the vector bundle $\mc O (b_i)$ on $\PP^1$ specializes to $\mc O (b'_i)$ in the sense of \cite{schatz-degeneration-specialization} if and only if $(b'_i) \geq (b_i).$

If $d^{(k)} \leq r^{(k)}$, then the most generic possible type has thus the form $(1,\dotsc,1,0,\dotsc,0)$. We call this the \emph{generic splitting type}. A computation shows that $d^{(k)} \leq r^{(k)}$ if $k\leq 2d$.

We have the following result on the cohomology class of the \emph{set of jumping lines}
\[
	Z\coloneqq \{T\in \GGr(1,\schemeofsurfaces)\mid V_{d+1,T} \text{ has non-generic type}\}
\]
in the Grassmannian of lines in $\schemeofsurfaces$:
\begin{theorem} \label{thm:cohomology-class}
	Let $n \leq 3$, let $Z$ be set of jumping lines of $V_{d+1}$, and let $[Z]$ be the class of $Z$ in the Chow ring $\operatorname{CH}(\GGr(1,\schemeofsurfaces))$. We have $$\dim Z = n+1+\binom{d-1+n}{n}.$$
Furthermore, let $b$ range over the integers with the property $0\leq b < \frac{\dim Z}{2}$ and define $a=\dim Z - b, a'=a+\frac{\codim Z-\dim Z}{2}$, $b'=b+\frac{\codim Z-\dim Z}{2}$.
	\begin{enumerate}
		\huyitem If $\dim Z$ is odd or $n=2$, we have
		\begin{equation} \label{class-of-locus}
			[Z] = \sum_{a,b} \left({\binom{a+1}{n}}{\binom{b+1}{n}}-{\binom{a+2}{n}}{\binom{b}{n}}\right) \sigma_{a',b'}. 
		\end{equation}
		\huyitem If $\dim Z$ is even and $n=3$, we have
		\begin{equation*}
			[Z] = \sum_{a,b} \left({\binom{a+1}{n}}{\binom{b+1}{n}}-{\binom{a+2}{n}}{\binom{b}{n}}\right) \sigma_{a',b'}
			+
			\binom{\frac{\dim Z}{2} + 2}{n}\binom{\frac{\dim Z}{2}}{n}\sigma_{\frac{\dim Z}{2},\frac{\dim Z}{2}}.
		\end{equation*}
	\end{enumerate}
\end{theorem}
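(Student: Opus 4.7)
The plan is to realize $Z$ as a degeneracy locus of a natural vector bundle map on $\GGr(1,\schemeofsurfaces)$ and to compute its class by Thom--Porteous expanded in the Schubert basis.

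On the universal $\PP^1$-bundle $p \colon \Sigma = \PP(S) \to \GGr(1,\schemeofsurfaces)$ with evaluation $q\colon \Sigma \to \schemeofsurfaces$ and tautological rank-$2$ subbundle $S$, I pull back the presentation
\[
0 \to S_1 \otimes \mc O(-1) \to S_{d+1} \otimes \mc O \to V_{d+1} \to 0
\]
(obtained by pushing forward the Koszul resolution of the universal hypersurface), twist by $q^*\mc O(-2)$, and push forward by $p$. Using the standard formulas for $R^i p_*$ of $\mc O_\Sigma$-twists (expressible in terms of $\mrm{Sym}^k S^*$ and $\det S^*$ via relative Serre duality), I identify $Z$ with the degeneracy locus of the resulting bundle map
\[
\phi \colon S_1 \otimes S^* \longrightarrow S_{d+1} \otimes \det S^*
\]
on $\GGr(1,\schemeofsurfaces)$, since $T \in Z$ precisely when the multiplication $T_0 \otimes S_1 \to S_{d+1}$ has a nontrivial kernel, where $T_0 \subset S_d$ is the pencil corresponding to $T$.

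With $c_1(\det S^*) = \sigma_1$ and $c(S^*) = 1 + \sigma_1 + \sigma_{1,1}$, the total Chern class
\[
c(S_{d+1}\otimes\det S^*)/c(S_1\otimes S^*) = (1+\sigma_1)^M / (1+\sigma_1+\sigma_{1,1})^{n+1}
\]
(with $M = \binom{d+n+1}{n}$) feeds into the Thom--Porteous formula and yields $[Z]$ as an explicit Schur polynomial in these Chern classes. Expanding via Pieri's rule and Giambelli's identity $\sigma_{a',b'} = \sigma_{a'}\sigma_{b'} - \sigma_{a'+1}\sigma_{b'-1}$, the coefficient of $\sigma_{a',b'}$ collapses to the $2\times 2$ determinant $\binom{a+1}{n}\binom{b+1}{n} - \binom{a+2}{n}\binom{b}{n}$ of case (i); the binomials $\binom{k+1}{n}$ enter as dimensions of the symmetric-power spaces that govern the push-pull.

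When $\dim Z$ is even and $n = 3$, Giambelli's antisymmetrization degenerates on the diagonal $a' = b'$, and the resulting uncanceled symmetric contribution produces the extra diagonal term $\binom{\dim Z/2 + 2}{n}\binom{\dim Z/2}{n}\sigma_{\dim Z/2,\dim Z/2}$ of case (ii). The main obstacle will be the combinatorial bookkeeping of the Schur-to-Schubert expansion: organizing the Pieri--Giambelli reduction so that it collapses to precisely the stated $2 \times 2$ binomial determinants, and isolating the residual diagonal term in the even-dimensional $n = 3$ case.
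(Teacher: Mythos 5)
Your identification of $Z$ as the locus where the multiplication map on the tautological subbundle drops rank is correct (it is exactly \Cref{no-more-than-ones}), but the Thom--Porteous strategy built on it fails for a concrete numerical reason: $Z$ does not have the expected codimension of a first degeneracy locus, so Thom--Porteous does not compute $[Z]$ --- it produces a class in the wrong graded piece of the Chow ring altogether. Your source bundle $S_1\otimes S^*$ has rank $2(n+1)$ and your target has rank $\binom{d+n+1}{n}$, so the expected codimension of the locus where $\phi$ fails to be injective is $\binom{d+n+1}{n}-2(n+1)+1=\binom{d+n+1}{n}-2n-1$. But a single linear syzygy $g_1f_1+g_2f_2=0$ forces $f_1$ and $f_2$ to share a common factor of degree $d-1$ (\Cref{condition-nongeneral}), so $Z$ is swept out by pairs (pencil of linear forms, form of degree $d-1$) and is much larger than a generic determinantal locus. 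For instance, for $n=2$, $d=2$ the expected codimension is $\binom{5}{2}-5=5$, while $Z$ is the image of the $4$-dimensional variety $\GGr(1,\schemeofsections{1})\times\schemeofsections{1}$ inside the $8$-dimensional $\GGr(1,\PP^5)$, so $\codim Z\leq 4$. Since the codimensions disagree as integers, no amount of bookkeeping in the Schur-to-Schubert expansion can rescue the computation; relatedly, the Porteous class of a first degeneracy locus is the single Chern class $c_{f-e+1}(F-E)$, a $1\times 1$ determinant, and would not naturally produce the stated $2\times 2$ binomial determinants. Your explanation of the extra diagonal term as a "degeneration of Giambelli's antisymmetrization" likewise has no counterpart in a correct argument.

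The paper proceeds entirely differently: it exploits the parametrization of $Z$ by $\GGr(1,\schemeofsections{1})\times\schemeofsections{d-1}$ and the method of undetermined coefficients, computing each coefficient $\deg([Z]\cdot\sigma_a\sigma_b)$ as an honest count, via Kleiman transversality, of lines of $Z$ meeting two general linear subspaces of $\schemeofsurfaces$. The binomials $\binom{a+1}{n}\binom{b+1}{n}$ arise from a push-pull computation along the multiplication map $\schemeofsections{1}\times\schemeofsections{d-1}\to\schemeofsurfaces$, and the extra diagonal term for $n=3$ and $\dim Z$ even comes from a separate geometric argument showing $\deg([Z]\cdot\sigma_{\dim Z/2}\sigma_{\dim Z/2})=0$, not from a combinatorial degeneration. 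If you want to keep a degeneracy-locus flavor, you would need an excess-intersection or resolution argument --- pushing a computation forward from the parametrizing space $\GGr(1,\schemeofsections{1})\times\schemeofsections{d-1}\to Z$ --- which is in substance what the paper's explicit count does.
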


The computation is carried out by the method of undetermined coefficients, leading into various calculations in the Chow ring of the Grassmannian. The assumption $n\leq 3$ is needed for a certain dimension estimation.

\subsection*{Aknowledgement}
This work is a condensed version of my Master's thesis, supervised by Daniel Huybrechts. I would like to take the opportunity to thank him for his mentorship during the writing of the thesis, as well as for his help during the preparation of this article.

\section{Attained splitting types}
There exists a short exact sequence of vector bundles on $\schemeofsurfaces$ 
	\begin{equation} \label{master-verlinde-sequence}
	0\to  \mc{O}(-1) \otimes H^0(\PP^n, \mc{O}(k-d)) 
	\xto{M}  \mc{O} \otimes {H^0(\PP^n, \mc{O}(k))   }
	\to{V_k}
	\to 0,
	\end{equation}
as can be seen by taking the pushforward of a twist of the structure sequence of $\mathfrak X$ on $\PP^n \times \schemeofsurfaces$.
	The map $M$ is given by multiplication by the section
	$$\sum_I \alpha_I \otimes x^I \in
	H^0(\schemeofsurfaces,\mc{O}(1)) \otimes H^0(\PP^n,\mc{O}(d)).$$
In particular, we have $r^{(k)} = \binom{k+n}{n} - \binom{k+n-d}{n}$ and $d^{(k)}=\binom{k+n-d}{n}$.

%!TEX root = ./master.tex
\begin{lemma} \label{mini-splitting-lemma}
	Let $\mc{E}$ be a free $\mc{O}_{\PP^1}$-module of finite rank, and let 
	\[
		0 \to \mc{E}' \xrightarrow{\phi} \mc{E} \xrightarrow{\psi} \mc{E}'' \to 0
 	\]
 	be a short exact sequence of $\mc{O}_{\PP^1}$-modules. Given a splitting $\mc{E}'' = \mc{E}''_1 \oplus \mc{O}$, we may construct a splitting $\mc{E} = \mc{E}_1 \oplus \mc{O}$ such that the image of $\phi$ is contained in $\mc{E}_1$.
\end{lemma}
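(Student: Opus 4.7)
The approach is to use the projection onto the trivial summand of $\mc{E}''$ to produce a surjection $\mc{E} \twoheadrightarrow \mc{O}$ whose kernel contains $\im(\phi)$, and then split this surjection using the freeness of $\mc{E}$.

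Concretely, I would let $q\colon \mc{E}'' \to \mc{O}$ denote the projection onto the second summand in the given decomposition $\mc{E}'' = \mc{E}''_1 \oplus \mc{O}$, and set $p \coloneqq q \circ \psi$. Then $p$ is surjective (as a composition of two surjections), and $p \circ \phi = q \circ (\psi \circ \phi) = 0$ by exactness, so $\im(\phi) \subseteq \ker(p)$.

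It remains to construct a section of $p$. Writing $\mc{E} \simeq \mc{O}_{\PP^1}^N$, the map $p$ is determined by an $N$-tuple of global sections $(c_1,\dotsc,c_N) \in H^0(\PP^1, \mc{O})^N = \mathbb{C}^N$. Surjectivity of $p$ forces $(c_i) \neq 0$, so picking any $c_j \neq 0$ and defining $s\colon \mc{O} \to \mc{E}$ by $s(1) \coloneqq c_j^{-1} e_j$ yields $p \circ s = \mrm{id}$. Setting $\mc{E}_1 \coloneqq \ker(p)$, the splitting $\mc{E} = \mc{E}_1 \oplus s(\mc{O})$ has the required property.

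The construction is essentially automatic once one observes that any surjection $\mc{O}^N \twoheadrightarrow \mc{O}$ on $\PP^1$ splits—there is no genuine obstacle here, as the freeness of $\mc{E}$ reduces the problem to finite-dimensional linear algebra over the constants. The only point worth emphasizing is that the argument crucially uses the triviality of $\mc{E}$; for a general locally free $\mc{E}$ on $\PP^1$, surjections onto $\mc{O}$ need not split, which is precisely why this lemma will be a useful technical tool in the splitting-type analysis.
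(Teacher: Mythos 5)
Your proof is correct, and although it lands on the same subsheaf as the paper---your $\ker(q\circ\psi)$ is exactly the paper's $\mc{E}_1 \coloneqq \ker(\pr_2\circ\psi)$---the mechanism you use to split off the trivial summand is genuinely different and more elementary. The paper first shows that $\mc{E}_1$ is free (by comparing determinants in $0\to\mc{E}_1\to\mc{E}\to\mc{O}\to 0$, which together with the fact that a subsheaf of $\mc{O}^N$ can only have non-positive twists forces $\mc{E}_1\isom\mc{O}^{N-1}$) and then splits the sequence via the vanishing $\Ext^1(\mc{O},\mc{E}_1)=H^1(\PP^1,\mc{E}_1)=0$. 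You instead note that $\operatorname{Hom}(\mc{O}^N,\mc{O})=H^0(\PP^1,\mc{O})^N=\mathbb{C}^N$, so the surjection $p$ is a nonzero constant row vector and can be split by exhibiting an explicit section; this reduces the whole lemma to linear algebra over the constants, avoids any cohomological input, and makes transparent exactly where the freeness of $\mc{E}$ is used (your closing remark that the statement fails for general locally free $\mc{E}$, e.g.\ for the Euler-type sequence $0\to\mc{O}(-2)\to\mc{O}(-1)^{2}\to\mc{O}\to 0$, is precisely the right caveat). Your route also quietly repairs a terse point in the paper's write-up: the determinant comparison alone does not yield freeness of $\mc{E}_1$ without the additional observation about subsheaves of trivial bundles, whereas in your argument the freeness of $\mc{E}_1$ comes out automatically as a direct summand of $\mc{O}^N$.
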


\begin{proof}
	Define $\mc{E}_1 \coloneqq \ker(\pr_2\circ\psi)$, which is a locally free sheaf on $\PP^1$. By comparing determinants in the short exact sequence $0 \to \mc{E}_1 \to \mc{E} \to \mc{O} \to 0$ we see that $\mc{E}_1$ is free, hence by an $\Ext^1$ computation the sequence splits. The property $\im(\phi) \subseteq \mc{E}_1$ follows from the definition.
\end{proof}

\begin{proposition} \label{number-zeroes}
	Let $f_1, f_2 \in \schemeofsurfaces$ span the line
	$T \subseteq \schemeofsurfaces$ and let $p$ be the 
	number of zero entries in the splitting type of $V_{k,T}$. Let $U\coloneqq H^0(\mathbb P^n,\mathcal O(k-d))$. We have
	\[
	p = \dim H^0(\PP^n,\mc{O}(k)) - \dim ({f_1 U + f_2 U}).
	\]

\end{proposition}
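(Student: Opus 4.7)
The plan is to identify the number $p$ of zero entries with $\dim \Hom(V_{k,T}, \mc{O}_{\PP^1})$, apply $\Hom(-, \mc{O}_{\PP^1})$ to the restriction of \cref{master-verlinde-sequence} to $T \cong \PP^1$, and then unwind the resulting linear algebra.

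First, write $V_{k,T} \simeq \bigoplus_i \mc{O}_{\PP^1}(b_i)$ with all $b_i \geq 0$. Since $\Hom(\mc{O}_{\PP^1}(b), \mc{O}_{\PP^1}) = H^0(\mc{O}_{\PP^1}(-b))$ vanishes for $b \geq 1$ and is one-dimensional for $b = 0$, we have $p = \dim \Hom(V_{k,T}, \mc{O}_{\PP^1})$.

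Next, I would apply $\Hom(-, \mc{O}_{\PP^1})$ to the restriction of \cref{master-verlinde-sequence} to $T$. Using $\Ext^1(\mc{O}_{\PP^1}(-1), \mc{O}_{\PP^1}) = H^1(\mc{O}_{\PP^1}(1)) = 0$, this yields the left-exact sequence
\[
	0 \to \Hom(V_{k,T}, \mc{O}_{\PP^1}) \to H^0(\PP^n, \mc{O}(k))^* \xrightarrow{\phi} H^0(\mc{O}_{\PP^1}(1)) \otimes U^*,
\]
thus reducing the problem to computing $\dim \ker \phi$.

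Finally, the restriction $M_T$ of $M$ to $T$ corresponds to the section $s \otimes f_1 + t \otimes f_2 \in H^0(\mc{O}_{\PP^1}(1)) \otimes H^0(\PP^n, \mc{O}(d))$, where $[s:t]$ are coordinates on $T$. Unpacking, the transpose $\phi$ sends $\lambda \in H^0(\PP^n,\mc{O}(k))^*$ to the map $u \mapsto s\lambda(f_1 u) + t\lambda(f_2 u)$, so $\lambda \in \ker \phi$ if and only if $\lambda$ vanishes on the subspace $f_1 U + f_2 U \subseteq H^0(\PP^n, \mc{O}(k))$. A dimension count then gives the claimed formula.

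The only real obstacle I anticipate is the last step: correctly identifying $\phi$ requires careful bookkeeping of tensor factors and dual spaces, but once this is set up the conclusion is immediate, and the rest of the argument is just $\Hom(-, \mc{O}_{\PP^1})$ applied to a known short exact sequence on $\PP^1$.
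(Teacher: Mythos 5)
Your proof is correct, and it takes a genuinely different route from the paper's. You compute $p$ as $\dim\operatorname{Hom}(V_{k,T},\mc O_{\PP^1})$ (valid since all $b_i\geq 0$, which the paper records right after \cref{master-verlinde-sequence}) and obtain the answer in one stroke: by left-exactness of contravariant $\operatorname{Hom}$, this is the kernel of the transposed multiplication map, i.e.\ the annihilator of $f_1U+f_2U$ in $H^0(\PP^n,\mc O(k))^*$ --- note you do not even need the $\Ext^1$ vanishing you invoke, since only the left-exact part of the long exact sequence is used. The paper instead proves the two inequalities separately: the inequality $p\geq \dim H^0(\PP^n,\mc O(k))-\dim(f_1U+f_2U)$ comes from the same observation you use, that $\im(M|_T)$ lands in $\mc O\otimes(f_1U+f_2U)$; the reverse inequality is proved by contradiction, using the auxiliary splitting lemma (\Cref{mini-splitting-lemma}) to extract a trivial summand and then a matrix row-reduction argument showing this would force the surjective ``doubled'' map $\wtilde M|_T$ to acquire a zero row. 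Your dualization bypasses both the splitting lemma and the matrix bookkeeping (indeed, \Cref{mini-splitting-lemma} is used nowhere else in the paper, so your route would render it unnecessary), and the key identification checks out: $\phi(\lambda)=0$ iff $s\lambda(f_1u)+t\lambda(f_2u)=0$ for all $u\in U$, which, since $s$ and $t$ are linearly independent in $H^0(\mc O_{\PP^1}(1))$, is exactly the condition that $\lambda$ vanish on $f_1U+f_2U$.
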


\begin{proof}
	Let $s$ and $t$ denote the homogeneous coordinates of $\mathbb P^1$. The map $M|_T$ sends a local section $\xi \otimes \theta$ to $s\xi \otimes f_1 \theta + t\xi \otimes f_2 \theta$. In particular, the image of $\mc{O}(-1)\otimes U$ is contained in $\mc{O} \otimes (f_1 U + f_2 U)$. It follows that
	$p \geq \dim H^0(\PP^n,\mc{O}(k)) - \dim ({f_1 U + f_2 U})$.

  To prove the other inequality, consider the induced sequence
  \[
  	0 \to \mc{O}(-1)\otimes U \xrightarrow{M|_T} \mc{O}\otimes (f_1 U + f_2 U) \to \mc{E}'' \to 0
  \]
  and assume for a contradiction that $\mc{E}'' \isom \mc{E}_1''\oplus \mc{O}.$ By \Cref{mini-splitting-lemma}, we have a splitting $\mc{O}\otimes (f_1 U + f_2 U) \isom \mc{E}_1 \oplus \mc{O}$ such that $\im(M|_T) \subseteq \mc{E}_1$. 

  Consider the map
  $\wtilde M|_T \from (\mc{O} \otimes U) \oplus (\mc{O} \otimes U) \to \mc{O} \otimes (f_1 U + f_2 U)$
  defined by $$\wtilde M|_T(a\otimes \theta_1,b \otimes \theta_2)=a\otimes f_1 \theta_1 + b \otimes f_2 \theta_2.$$
  We obtain the matrix description of $\wtilde M|_T$ from the matrix description of $M|_T$ as follows. If $M|_T$ is represented by the matrix $A$ with coefficients $A_{i,j} = \lambda_{i,j} s + \mu_{i,j} t$, then $\wtilde M|_T$ is represented by a block matrix
  \[
  	B = \left(
  		\begin{array}{c|c}
  			A' & A'' \\
  		\end{array}
  	\right)
  \]
  with $A'_{i,j} = \lambda_{i,j}$ and $A''_{i,j} = \mu_{i,j}$.

  The property $\im(M|_T)\subseteq \mc{E}_1$ implies that after some row operations, the matrix $A$ has a zero row. By the construction of $\wtilde M|_T$, the same row operations lead to the matrix $B$ having a zero row, but this is a contradiction, since the map $\wtilde M|_T$ is surjective.
\end{proof}

\begin{corollary} \label{no-more-than-ones}
	Let $T \subseteq \abs{\mc O (d)}$ be a line spanned by the polynomials $f_1,f_2$. Assume that $d^{(k)} \leq r^{(k)}$. Let $\theta$ range over a monomial basis of $H^0(\PP^n, \mc{O}(k-d))$. The bundle $V_{k,T}$ has the generic splitting type if and only if
	$\gen{f_1\theta,f_2\theta \mid \theta}$ is a linearly independent set in $H^0(\PP^n,\mc{O}(k))$. \qed
\end{corollary}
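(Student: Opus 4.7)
The corollary is essentially a direct translation of \Cref{number-zeroes} into a combinatorial condition on the $f_i\theta$, so my plan is to unpack the two sides carefully and match them.

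First I would analyze the integer $p$ appearing in \Cref{number-zeroes} under the hypothesis $d^{(k)} \leq r^{(k)}$. The splitting type is a tuple $(b_1,\dotsc,b_{r^{(k)}})$ of non-negative integers summing to $d^{(k)}$. If $p$ denotes the number of zero entries, the remaining $r^{(k)}-p$ entries are $\geq 1$, so $r^{(k)}-p \leq d^{(k)}$, giving $p \geq r^{(k)} - d^{(k)}$. Equality holds precisely when every non-zero entry equals $1$, i.e.\ exactly when the type is the generic one $(1,\dotsc,1,0,\dotsc,0)$. Thus $V_{k,T}$ has generic splitting type if and only if $p = r^{(k)} - d^{(k)}$.

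Next I would translate this into the language of the subspace $f_1 U + f_2 U \subseteq H^0(\PP^n,\mc O(k))$. Using the formulas $r^{(k)} = \dim H^0(\PP^n,\mc O(k)) - \dim U$ and $d^{(k)} = \dim U$, the equation $p = r^{(k)} - d^{(k)}$ becomes, after substituting into the formula from \Cref{number-zeroes},
\[
	\dim(f_1 U + f_2 U) = 2\dim U.
\]
Since $\{f_1\theta, f_2\theta \mid \theta\}$ is a spanning set for $f_1 U + f_2 U$ of cardinality $2\dim U$ (here I use that the $\theta$'s form a basis of $U$), this dimension equality is equivalent to this spanning set being linearly independent, which is exactly the condition in the statement.

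The argument involves no real obstacle: it is a matching of dimensions once one observes the simple combinatorial fact that generic splitting type is characterized among all admissible types by having the minimal possible number of zero entries.
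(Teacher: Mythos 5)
Your proof is correct and is exactly the argument the paper intends: the corollary is stated with an immediate \qed as a direct consequence of \Cref{number-zeroes}, and your unpacking (generic type $\iff$ minimal number of zeros $p = r^{(k)} - d^{(k)}$ $\iff$ $\dim(f_1U+f_2U) = 2\dim U$ $\iff$ the spanning set $\{f_1\theta, f_2\theta\}$ is linearly independent) supplies precisely the omitted details.
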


\begin{corollary} \label{nongeneral-type-shared-sections}
	Let $T \subseteq \abs{\mc O (d)}$ be a line spanned by the polynomials $f_1,f_2$, and let $d^{(k)} \leq r^{(k)}$. The bundle $V_{k,T}$ has not the generic type if and only if $\deg(\gcd(f_1,f_2)) \geq 2d-k$. In particular, if $d^{(k)} \leq r^{(k)}$ but $k>2d$ then the generic type never occurs.
\end{corollary}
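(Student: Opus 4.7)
The plan is to translate the linear-independence criterion of \Cref{no-more-than-ones} into a purely polynomial statement and then apply unique factorization in the polynomial ring $\mathbb{C}[x_0,\dotsc,x_n]$.

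By \Cref{no-more-than-ones}, $V_{k,T}$ fails to have the generic splitting type precisely when there exist $\alpha, \beta \in U \coloneqq H^0(\PP^n,\mc{O}(k-d))$, not both zero, such that $f_1\alpha + f_2\beta = 0$ in $H^0(\PP^n,\mc{O}(k))$. Indeed, collecting the coefficients of a putative dependence $\sum_\theta a_\theta f_1\theta + \sum_\theta b_\theta f_2\theta = 0$ into the polynomials $\alpha = \sum_\theta a_\theta\theta$ and $\beta = \sum_\theta b_\theta\theta$ transforms the dependence into such a relation. It thus suffices to show that a nonzero pair $(\alpha,\beta)$ with $f_1\alpha = -f_2\beta$ exists if and only if $\deg \gcd(f_1,f_2) \geq 2d-k$.

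Write $g \coloneqq \gcd(f_1,f_2)$ and $f_i = g h_i$, so that $\gcd(h_1,h_2) = 1$. Any relation $f_1\alpha = -f_2\beta$ descends to $h_1\alpha = -h_2\beta$; by coprimality and unique factorization, $h_2 \mid \alpha$, whence $\alpha = h_2\gamma$ and $\beta = -h_1\gamma$ for some $\gamma \in H^0(\PP^n,\mc{O}(k - 2d + \deg g))$. Conversely, every such $\gamma$ produces a valid pair via the same formula. A nonzero $\gamma$ exists precisely when $k - 2d + \deg g \geq 0$, giving the stated equivalence.

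The ``in particular'' claim is then immediate: under the standing assumption $d^{(k)} \leq r^{(k)}$, if $k > 2d$ then $2d-k < 0 \leq \deg g$, so the degree condition is trivially satisfied for every pencil $(f_1,f_2)$, and the generic type never occurs. The only substantive ingredient of the whole argument is unique factorization in $\mathbb{C}[x_0,\dotsc,x_n]$, which is entirely standard, so I do not anticipate any real obstacle.
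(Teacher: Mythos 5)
Your proposal is correct and follows essentially the same route as the paper: both reduce via \Cref{no-more-than-ones} to the existence of a nontrivial relation $f_1\alpha + f_2\beta = 0$ with $\alpha,\beta \in H^0(\PP^n,\mc O(k-d))$, and both resolve it by factoring out $\gcd(f_1,f_2)$ and using unique factorization. Your write-up is slightly more explicit about the converse direction (parametrizing all relations by $\gamma$), but the substance is identical.
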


\begin{proof}
	By \Cref{no-more-than-ones}, the bundle $V_{k,T}$ has non-generic type if and only if there exist linearly independent $g_1,g_2\in H^0(\PP^n,\mc{O}(k-d))$ such that $g_1f_1+g_2f_2 = 0$. Let $h \coloneqq \gcd(f_1,f_2)$ and $d'\coloneqq \deg h$.

	If $d' \geq 2d-k$ then $\deg (f_i/h) \leq k-d$ and we may take $g_1,g_2$ to be multiples of $f_1/h$ and $f_2/h$, respectively.

	On the other hand, given such $g_1$ and $g_2$, we have $f_1\divides g_2 f_2$, which implies $f_1/h \divides g_2$, hence $d-d'\leq k-d$.
\end{proof}

% \begin{example} \label{no-big-types}
% 	For $n=2, d=2,$ and $k=3$, we have $d^{(k)}=3$ and $r^{(k)}=10$. We show that the only types of $V_k$ that occur are $(1_3, 0_7)$ and $(2_1,1_1,0_8)$. The first type occurs \eg for $f_1 = x_0^2, f_2=x_1^2$, and the second for $f_1=x_0^2, f_2=x_0x_1$. Assume that the type $(3_1,0_9)$ occurs for some $f_1,f_2 \in H^0(\PP^2,\mc{O}(2))$. By \Cref{number-zeroes} we then have $\dim\gen{f_1 x_j,f_2 x_j}_{j=0}^2 = 4$. Hence, we find $g_1,g_2,g'_1,g'_2\in H^0(\PP^2,\mc{O}(1))$ and two linearly independent equations
% 	\begin{align*}
% 	g_1f_1 + g_2f_2 &= 0 \\
% 	g'_1f_1 + g'_2f_2 &= 0,
% 	\end{align*}
% 	with both sets $(g_1,g_2), (g'_1,g'_2)$ linearly independent.
% 	From the first equation it follows that $f_1 = g_2 h$ and $f_2 = -g_1 h$, for some common linear factor $h$. Applying this to the second equation, we find $g'_1 g_2 = g'_2 g_1$, hence $g'_1 = \alpha g_1$ and $g'_2 = \alpha g_2$ for some scalar $\alpha$, a contradiction.
% \end{example}

\begin{proposition} \label{no-big-types-general}
	Let $k=d+1$. No types of $V_k$ other than
	$(1,\dotsc,1,0,\dotsc,0)$ and $(2,1,\dotsc,1,0,\dotsc,0)$ occur. 
\end{proposition}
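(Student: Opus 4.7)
The plan is to case-split on whether the line $T = \gen{f_1,f_2}$ is generic or not, and then to use the numerical constraints from \Cref{number-zeroes} to pin down the non-generic case.

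First I invoke \Cref{nongeneral-type-shared-sections} with $k = d+1$: non-generic type occurs precisely when $\deg\gcd(f_1,f_2) \geq 2d - (d+1) = d-1$. Since $f_1, f_2$ span a line they are linearly independent, so the gcd strictly divides each, forcing its degree to be exactly $d-1$. In the generic case the type is $(1,\dotsc,1,0,\dotsc,0)$ by definition, and nothing remains to check.

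For the non-generic case I write $f_i = h \bar f_i$ with $h$ of degree $d-1$ and $\bar f_1, \bar f_2$ linearly independent (hence coprime) linear forms, and compute $\dim(f_1 U + f_2 U)$ with $U = H^0(\PP^n, \mc O(1))$. A relation $f_1 u_1 = f_2 u_2$ cancels $h$ to yield $\bar f_1 u_1 = \bar f_2 u_2$, and coprimality forces $\bar f_1 \divides u_2$; since $\deg u_2 = 1 = \deg \bar f_1$, this gives $u_2 = c\bar f_1$ and $u_1 = c\bar f_2$ for some scalar $c$. So $f_1 U \cap f_2 U$ is one-dimensional and $\dim(f_1 U + f_2 U) = 2(n+1) - 1$.

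Applying \Cref{number-zeroes} then gives that the number of zero entries is $p = \dim H^0(\PP^n,\mc O(d+1)) - (2n+1)$, so the number of strictly positive entries equals $r^{(k)} - p = n$. Since these $n$ positive integers must sum to $d^{(k)} = \binom{n+1}{n} = n+1$, they are forced to be $n-1$ ones and a single $2$, producing the type $(2,1,\dotsc,1,0,\dotsc,0)$. The only real content is the intersection calculation; everything else is bookkeeping built on \Cref{number-zeroes,nongeneral-type-shared-sections}.
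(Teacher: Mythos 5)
Your proof is correct, and it is organized differently from the paper's. The paper argues by contradiction: assuming a type other than the two listed occurs, \Cref{number-zeroes} yields two linearly independent syzygies $g_1f_1+g_2f_2=0$ and $g_1'f_1+g_2'f_2=0$ in degree one, and unique factorization forces $(g_1',g_2')$ to be proportional to $(g_1,g_2)$, a contradiction; the identification of the non-generic type as $(2,1,\dotsc,1,0,\dotsc,0)$ is then deferred to \Cref{condition-nongeneral}. You instead route through \Cref{nongeneral-type-shared-sections} to pin down $\deg\gcd(f_1,f_2)=d-1$ exactly (the step ruling out degree $d$ via linear independence is right), and then compute $\dim(f_1U+f_2U)=2n+1$ directly from the one-dimensionality of $f_1U\cap f_2U$, which via \Cref{number-zeroes} forces exactly $n$ positive entries summing to $d^{(k)}=n+1$. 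The underlying algebra is the same in both proofs --- degree-one syzygies of $f_1,f_2$ are controlled by their gcd --- but your quantitative version buys more: it simultaneously establishes \Cref{condition-nongeneral}, whereas the paper's contradiction argument only excludes the larger types. Both arguments silently use that the entries of the splitting type are non-negative and sum to $d^{(k)}$, which the paper records as a consequence of the sequence \cref{master-verlinde-sequence}, so that is fine.
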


\begin{proof}
	Assume that the type of $V_k$ at some line $(f_1,f_2)$ is other than the two above. Then the type has at least two more zero entries than the general type. By \Cref{number-zeroes}, we have $\dim \gen{f_1 \theta, f_2 \theta \mid \theta} \leq 2d^{(k)}-2$, so we find $g_1,g_2,g'_1,g'_2\in H^0(\PP^n,\mc{O}(1))$ and two linearly independent equations
	\begin{align*}
		g_1f_1 + g_2f_2 &= 0 \\
		g'_1 f_1 + g'_2 f_2 &= 0,
	\end{align*}
	with both sets $(g_1,g_2), (g'_1,g'_2)$ linearly independent.
	From the first equation it follows that $f_1 = g_2 h$ and $f_2 = -g_1 h$, for some common factor $h$. Applying this to the second equation, we find $g'_1 g_2 = g'_2 g_1$, hence $g'_1 = \alpha g_1$ and $g'_2 = \alpha g_2$ for some scalar $\alpha$, a contradiction.
\end{proof}

%\include{example-k3}

%\include{generic-type-2d}

% \begin{proof}
% 	By \Cref{no-big-types-general}, the type $(1,\dotsc,1,0,\dotsc,0)$ does not occur. As all other types are larger than $\sigma\coloneqq (2,1,\dotsc,1,0,\dotsc,0)$, it suffices to prove that there exists a line where $V_{2d}$ has type $\sigma$. Consider the line $T$ spanned by $f_1\coloneqq x_0^d$ and $f_2\coloneqq x_1^d$. Letting $\theta$ range over the monomial basis of $H^0(\mc{O}(d))$, whe have $\dim \gen{f_1\theta,f_2\theta \mid \theta}=\dim H^0(\mc{O}(d))-1$ since the only nontrivial linear equation in the above set of vectors is $f_1 f_2 - f_2 f_1 = 0$. By \Cref{number-zeroes}, the type of $V_{2d}|_T$ has exactly one zero entry more than the non-occurring type $(1,\dotsc,1,0,\dotsc,0)$. But the only such type is $\sigma$.
% \end{proof}

%Applying \Cref{nongeneral-type-shared-sections} to this situation yields the immediate

\begin{corollary} \label{condition-nongeneral}
	Let $k=d+1,$ let $T\subset \abs {\mc O (d)}$ be a line spanned by $f_1$ and $f_2$. The type $(2,1,\dotsc,1,0,\dotsc,0)$ occurs if and only if $\deg (\gcd(f_1,f_2) \geq d-1$. \qed
\end{corollary}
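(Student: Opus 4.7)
The plan is to deduce this immediately by combining the two preceding results, so there is essentially no real obstacle: the work has already been done.

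Specifically, \Cref{no-big-types-general} tells us that for $k = d+1$ only two splitting types occur on any line $T$, namely the generic type $(1,\dotsc,1,0,\dotsc,0)$ and $(2,1,\dotsc,1,0,\dotsc,0)$. Therefore the type at $T$ equals $(2,1,\dotsc,1,0,\dotsc,0)$ if and only if $V_{d+1,T}$ fails to have the generic type.

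Next, I would verify that the hypothesis $d^{(k)} \leq r^{(k)}$ of \Cref{nongeneral-type-shared-sections} is satisfied, which is the case since $k = d+1 \leq 2d$ for all $d \geq 1$. Applying \Cref{nongeneral-type-shared-sections} with $k = d+1$ then shows that $V_{d+1,T}$ has non-generic type if and only if $\deg(\gcd(f_1,f_2)) \geq 2d - (d+1) = d-1$. Chaining the two equivalences yields the claim.
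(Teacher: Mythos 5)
Your proposal is correct and is exactly the argument the paper intends (the corollary is stated with \qed precisely because it follows by combining \Cref{no-big-types-general} with \Cref{nongeneral-type-shared-sections}). Your explicit check that $d^{(d+1)} \leq r^{(d+1)}$ via $d+1 \leq 2d$ is a worthwhile detail the paper leaves implicit.
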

%!TEX root = ./master.tex
%\section{Loci of Types in the Grassmannian}

%In this section, we study the subsets of the grassmannian of lines in $\schemeofsurfaces$ corresponding to the different splitting types for $V_k$. For this section, let $d\leq k < 2d$, so that the generic splitting type is surely attained. We focus on the generic splitting type and the complement of its corresponding set in the Grassmannian, the set of jumping lines. As expected, the set corresponding to the generic splitting type is an open dense subset of the Grassmannian, and the other loci are locally closed. For $k=d+1$, and $n=3$ we describe the cohomology class of the set of jumping lines in terms of Schubert cells.

\section{The cohomology class of the set of jumping lines}
%!TEX root = ./master.tex

\begin{definition}
Let $k \geq 1$ and $(b_i)$ be a splitting type for $V_k$.
We define the set
$Z_{(b_i)}$ of all points $t\in\GGr(1,\schemeofsurfaces)$ such that $V_{k,t}$ has splitting type $(b_i)$. For the set of points $t$ where $V_{k,t}$ has generic splitting type we also write $Z_{\text{gen}}$, and define the \emph{set of jumping lines} $Z\coloneqq \GGr(1,\schemeofsurfaces) \setminus Z_{gen}.$
\end{definition}

Now let $k=d+1$. By \Cref{condition-nongeneral}, $Z$ is the subvariety given as the image of the finite, generically injective multiplication map
$$\phi \from \GGr(1,\schemeofsections{1}) \times \schemeofsections{d-1} \to \GGr(1,\schemeofsurfaces)$$
sending the tuple $((sg_1+tg_2)_{(s:t)\in \PP^1},h)$ to the line $(shg_1+thg_2)_{(s:t)\in \PP^1}$. 
%\input{z-is-zariski-open}

%\input{supp-are-detvars}

%\input{loci-locally-closed}
%\subsection{The components of the set of jumping lines}
%\input{jumping-lines-dimension.tex}
%\subsection{The expected codimension of \texorpdfstring{$Z$}{Z}}
%\input{expected-codimension}
%!TEX root = ./master.tex
To perform calculations in the Chow ring $A$ of $\GGr(1,\schemeofsurfaces)$, we follow the conventions found in \cite{eisenbud-harris-intersection-theory}. Let $N\coloneqq \dim H^0(\mc{O}(d)) = \binom{n+d}{n}$. For $N-2\geq a\geq b$, we have the Schubert cycle 
\[
	\Sigma_{a,b}\coloneqq \{T \in \GGr(1,\schemeofsurfaces) : T \cap H \neq \leer, T \subseteq H'\},
\]
where $(H\subset H')$ is a general flag of linear subspaces of dimension $N-a-2$ resp.\ $N-b-1$ in the projective space $\schemeofsurfaces$.
The ring $A$ is generated by the Schubert classes $\sigma_{a,b}$ of the cycles $\Sigma_{a,b}$.
The class $\Sigma_{a,b}$ has codimension $a+b$, and we use the convention $\sigma_{a}\coloneqq \sigma_{a,0}$.

%We calculate the cohomology class of the set of jumping lines $Z$ of the Verlinde bundle $V_{d+1,t}$ of the family of hypersurfaces of degree $d+1$ in $\PP^n$, with $n\leq 3$. Since $Z$ is the image of the map $\phi$, 

% By Kleiman's theorem, finding the product of $\sigma_{a,b}$ and some class $[Z]$ of complementary codimension amounts to calculating the cardinality $\alpha$ of the set $Z\cap \Sigma_{a,b}(\mc{H})$, where $\mc{H}$ is a general flag. Then $\sigma_{a,b} \cdot [Z] = \alpha \sigma_{33,33}$.

%\begin{theorem} \label{thm:cohomology-class}
%	Let $n \leq 3$, let $Z$ be set of jumping lines of $V_{d+1}$, and let $[Z]$ be the class of $Z$ in the Chow ring $A$. Let $b$ range over the integers with the property $0\leq b < \frac{\dim Z}{2}$ and define $a=\dim Z - b, a'=a+\frac{\codim Z-\dim Z}{2}$, $b'=b+\frac{\codim Z-\dim Z}{2}$.
%	\begin{enumerate}
%		\huyitem If $\dim Z$ is odd or $n=2$, we have
%		\begin{equation} \label{class-of-locus}
%			[Z] = \sum_{a,b} \left({\binom{a+1}{n}}{\binom{b+1}{n}}-{\binom{a+2}{n}}{\binom{b}{n}}\right) \sigma_{a',b'}. 
%		\end{equation}
%		\huyitem If $\dim Z$ is even and $n=3$, we have
%		\begin{equation*}
%			[Z] = \sum_{a,b} \left({\binom{a+1}{n}}{\binom{b+1}{n}}-{\binom{a+2}{n}}{\binom{b}{n}}\right) \sigma_{a',b'}
%			+
%			\binom{\frac{\dim Z}{2} + 2}{n}\binom{\frac{\dim Z}{2}}{n}\sigma_{\frac{\dim Z}{2},\frac{\dim Z}{2}}.
%		\end{equation*}
%	\end{enumerate}
%\end{theorem}

\begin{proof}[Proof of \Cref{thm:cohomology-class}]
	We have $\dim Z = n+1+\binom{d-1+n}{n}$ since $Z$ is the image of the generically injective map $\phi$.
	
	Let $Q \subset \schemeofsurfaces$ be the image of the multiplication map
		$$f\from \schemeofsections{1} \times \schemeofsections{d-1} \to \schemeofsurfaces.$$
	The map $f$ is birational on its image, since a general point of $Q$ has the form $gh$ with $h$ irreducible of degree $d-1$.
%We have $\dim(Q) = 22$ and $\dim(Z)=23$, while $\codim(Z)=43$. 
	The Chow group $A^{\codim Z}$ is generated by the classes $\sigma_{a',b'}$ with
	$N-2\geq a'\geq b' \geq \floor{\frac{\codim Z}{2}}$ and
	$a'+b'=\codim Z$, while the complementary group $A^{\dim Z}$ is generated by the classes
	$\sigma_{\dim Z-b,b}$ with $b\in {0,\dotsc,\floor{\frac{\dim Z}{2}}}$.
	Write
		\[
			[Z] = \sum_{a',b'} \alpha_{a',b'} \sigma_{a',b'}. 
		\]
	We have $\sigma_{a',b'} \sigma_{a,b} = 1$ if $b'-b = \floor{\frac{\codim Z}{2}}$ and $0$ else. Hence, multiplying the above equation with the complementary classes $\sigma_{a,b}$ and taking degrees gives
		$$
		\alpha_{a',b'} = \deg([Z]\cdot \sigma_{a,b}).
		$$
	Using Giambelli's formula
	$\sigma_{a,b}=\sigma_{a}\sigma_{b} - \sigma_{a+1}\sigma_{b-1}$ \cite[Prop.\ 4.16]{eisenbud-harris-intersection-theory}, we reduce to computing $\deg([Z] \cdot \sigma_{a}\sigma_{b})$ for $0\leq b\leq \floor{\frac{\dim Z}{2}}$.
	By Kleiman transversality, we have 
	\[
	\deg([Z] \cdot \sigma_{a}\sigma_{b}) = \abs{\{T\in Z : T \cap H \neq \leer, T \cap H' \neq \leer\}},
	\]
	where $H$ and $H'$ are general linear subspaces of $\schemeofsurfaces$ of dimension $N-a-2$ and $N-b-2$, respectively.

	To a point $p = g_p h_p \in Q$ with $g_p \in \schemeofsections{1}$ and $h_p \in \schemeofsections{d-1}$, associate a closed reduced subscheme $\Lambda_p\subset Q$ containing $p$ as follows. If $h_p$ is irreducible, let $\Lambda_p$ be the image of the linear embedding $\schemeofsections{1}\times \{h_p\} \to \schemeofsurfaces$ given by $g \mapsto g h_p$.

	If $h_p$ is reducible, define the subscheme $\Lambda_p$ as the union $\bigcup_h \im(\schemeofsections{1} \times \{h\}\to \schemeofsurfaces)$, where $h$ ranges over the (up to multiplication by units) finitely many divisors of $p$ of degree $d-1$.

	Note that for all points $p$, the spaces $\im(\schemeofsections{1} \times \{h\}\to \schemeofsurfaces)$ meet exactly at $p$.

	% If $h_p = g'_p h'_p$ with $h'_p \in \schemeofsections{2}$ irreducible, let $\Lambda_p$ be the union of the images of the linear embeddings $\schemeofsections{1} \times \{h_p\} \to \schemeofsurfaces$ and $\schemeofsections{1} \times \{g_p h'_p\} \to \schemeofsurfaces$. These two linear subspaces meet exactly at $p$. Similarly, if $p$ is the product of four linear forms, define the space $\Lambda_p$ as the union $\bigcup_h \im(\schemeofsections{1} \times \{h\}\to \schemeofsurfaces)$, where $h$ runs over the four cubics arising as products of the linear factors of $p$.

	By the definition of $Z$, all lines $T\in Z$ lie in $Q$. Furthermore, if $T$ meets the point $p$, then $T\subseteq \Lambda_p$.
	For $H\subseteq\schemeofsurfaces$ a linear subspace of dimension $N-a-2$, define $Q'\coloneqq H\cap Q$. For general $H$, the subscheme $Q'$ is a smooth subvariety of dimension $b-n+1$ such that for a general point $p=gh$ of $Q'$ with $h\in \schemeofsurfaces$, the polynomial $h$ is irreducible.

	% By the construction of the $\Lambda_p$, if a line $T\in Z$ meets the point $p$, then $T\subseteq \Lambda_p$. A general linear subspace $H$ of dimension $10+a$ intersects the variety $Q$ at a smooth subvariety $Q'$ of dimension $a-2$. Furthermore, it may be assumed that a general point $p=gh$ of $Q'$ with $h\in \schemeofsections{3}$ has $h$ irreducible. Indeed, the subvariety of $Q$ where $h$ is reducible has dimension $\dim(\schemeofsections{1} \times \schemeofsections{1} \times \schemeofsections{2}) = 15$, so the dimension of its intersection with a general $H$ is $a-9$.

	Next, we consider the case $n=2$ or $\dim Z$ odd.
	\begin{claim}
	For genereal $H$, for each point $p\in Q'$ we have $\Lambda_p \cap H =\{p\}$.
	\end{claim}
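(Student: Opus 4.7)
\begin{subproof}
The plan is to set up an incidence variety for the \emph{extra} intersection points $x \in \Lambda_p \cap H$ with $x \neq p$, then use a dimension count to show that this incidence does not dominate the Grassmannian $\mc H$ of codimension-$(a+1)$ linear subspaces of $\schemeofsurfaces$. Concretely, consider
\[
\tilde I \coloneqq \{(H,p,x) \in \mc H \times Q \times \schemeofsurfaces : p \in H,\ x \in \Lambda_p \setminus \{p\},\ x \in H\};
\]
the claim is equivalent to the first projection $\tilde I \to \mc H$ being non-dominant, and for this it suffices to show that $\dim \tilde I < \dim \mc H$.

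To estimate $\dim \tilde I$ I would project it instead to the ``off-diagonal'' incidence
\[
\Xi^\circ \coloneqq \{(p,x) \in Q \times \schemeofsurfaces : x \in \Lambda_p,\ x \neq p\}.
\]
For a generic $p = gh \in Q$ with $h$ irreducible, the fibre of $\Xi^\circ \to Q$ over $p$ is $\Lambda_p \setminus \{p\} \cong \PP^n \setminus \{p\}$, giving $\dim \Xi^\circ = \dim Q + n$; equivalently, $\Xi^\circ$ is generically parameterised by $(g,g',h) \in \schemeofsections{1}^2 \times \schemeofsections{d-1}$ via $(g,g',h) \mapsto (gh, g'h)$. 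Meanwhile, for fixed distinct $p, x \in \schemeofsurfaces$, the fibre of $\tilde I \to \Xi^\circ$ over $(p,x)$ is a Schubert subvariety of $\mc H$ of codimension exactly $2(a+1)$, cut out by the two independent incidence conditions $p \in H$ and $x \in H$. Combining, $\dim \tilde I = \dim \Xi^\circ + \dim \mc H - 2(a+1)$, so the desired bound reduces to the strict inequality $2(a+1) > \dim Q + n$.

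The last step is to verify this inequality. In case (i) of the theorem we have $b < \dim Z / 2$ strictly, so $a = \dim Z - b > \dim Z / 2$ and hence $2a > \dim Z$. Substituting the expressions for $\dim Z$ and $\dim Q$ arising from the generic injectivity of $\phi$ and $f$, the inequality $2(a+1) > \dim Q + n$ becomes a numerical check which holds in the regime $n \leq 3$ covered by the theorem.

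The main technical obstacle is the locus where $h_p$ fails to be irreducible: there $\Lambda_p$ splits as a union of several $\PP^n$'s, and the fibre-dimension argument above is not valid pointwise. This locus is, however, a proper closed subvariety of $Q$ (and it is empty when $d = 2$), so its contribution to $\tilde I$ has strictly smaller dimension and the overall bound is unaffected.
\end{subproof}
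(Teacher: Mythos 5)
Your argument is the same incidence--dimension-count strategy as the paper's; the only real difference is bookkeeping (you fiber over pairs $(p,x)$ and impose the two point-conditions $p,x\in H$, each of codimension $a+1$, whereas the paper fibers over $p$ alone and uses the Schubert condition that $H$ meet the linear space $\Lambda_p$ in a line). Your deferred ``numerical check'' does go through, and more cleanly than you suggest: since $\dim Q = \dim\schemeofsections{1}+\dim\schemeofsections{d-1} = \dim Z + 2 - n$, the inequality $2(a+1) > \dim Q + n$ is literally equivalent to $2a > \dim Z$, i.e.\ to $a>b$ -- no dependence on $n\leq 3$ enters at this point. You should write that one line out rather than assert it.

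The genuine gap is the boundary case $a=b$, which occurs when $n=2$ and $\dim Z$ is even with $b=\dim Z/2$. The paper's proof of the Claim explicitly covers this case (its estimate for $n=2$ is $\dim Q - 2b \geq \dim Q - \dim Z = 0$, i.e.\ only $\dim X \leq \dim\mc H$, which suffices there because every nonempty fiber of $X\to\mc H$ is positive-dimensional), and the Claim is needed in that range to compute the coefficient of $\sigma_{b',b'}$ for $n=2$. Your strict inequality $\dim\tilde I < \dim\mc H$ fails exactly when $a=b$, so your argument as written does not recover the full statement. The fix is the same observation the paper uses: if $(H,p,x)\in\tilde I$, then the whole line through $p$ and $x$ lies in $H\cap\Lambda'_p\subseteq Q$, so the fiber of $\tilde I\to\mc H$ over any point of its image has dimension at least one; hence $\dim\tilde I\leq\dim\mc H$ already forbids dominance, and your count gives exactly $\dim\mc H-\dim\tilde I = a-b\geq 0$ in all cases of the Claim. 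You should add this remark (or restrict your claim honestly to $a>b$ and treat $a=b$ separately). The last paragraph on reducible $h_p$ is fine: the fibers of $\Xi^\circ\to Q$ still have dimension $n$ there, so that locus contributes strictly less to $\dim\tilde I$.
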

	\begin{subproof}[Proof]

	Let $\mc{H}$ denote the Grassmannian $\Gr(\dim H+1, N)$. Define the closed subset $X\subseteq Q\times \mc{H}$ by
	$$X\coloneqq \{(p,H):\dim(H\cap \Lambda_p)\geq 1\}.$$ The fibers of the induced map $X\to \mc{H}$ have dimension at least one. Hence, to prove that the desired condition on $H$ is an open condition, it suffices to prove $\dim(X) \leq \dim(\mc{H})$.

	The fiber of the map  $X\to Q$ over a point $p$ consists of the union of finitely many closed subsets of the form $X'_p = \{H\in \mc{H} : \dim(H\cap \Lambda'_p)\geq 1\}$, where $\Lambda'_p\isom \PP^n\subseteq \schemeofsurfaces$ is one of the components of $\Lambda_p$. The space $X'_p$ is a Schubert cycle
	\[
		\Sigma_{\dim Q - b,\dim Q - b} = \{H\in \Gr(\dim H+1, N) : \dim(H \cap H_{n+1}) \geq 2\},
	\]
	with $H_{n+1}$ an $(n+1)$-dimensional subspace of $H^0(\mc{O}(d))$. The codimension of the cycle is $2(\dim Q - b)$, hence also $\codim(X_p) = 2(\dim Q -b)$. Finally, we have $\dim(\mc{H})-\dim(X) = \codim(X_p) - \dim(Q) = \dim Q - 2b.$

	If $\dim Z$ is odd, then $\dim Q - 2b \geq \dim Q - \dim Z + 1 = 3-n\geq 0$. If $n=2$, we instead estimate $\dim Q - 2b \geq \dim Q - \dim Z = 2-n\geq 0$. \end{subproof}

	Next, let $$\Lambda \coloneqq \bigcup_{p\in Q'} \Lambda_p = f(\schemeofsections{1}\times \pr_2 f^{-1}(Q'))$$ and $$\Lambda''\coloneqq \schemeofsections{1}\times \pr_2 f^{-1}(Q').$$

	By the choice of $H$, the map $f^{-1}(Q')\to Q'$ is birational and the map $f^{-1}(Q')\to \pr_2f^{-1}(Q')$ is even bijective. It follows that $\Lambda''$ and hence $\Lambda$ have dimension $b+1$.

	The intersection of $\Lambda$ with a general linear subspace $H'$ of dimension $N-b-2$ is a finite set of points. For each point $p\in Q'$, the linear subspace $H'$ intersects each component $\Lambda'_p$ of $\Lambda_p$ in at most one point. For each point $p'\in H'\cap\Lambda$ there exists a unique $p$ such that $p'\in\Lambda_p$.

	The only line $T\in Z$ meeting both $p$ and $H'$ is the one through $p$ and $p'$. If the intersection $H'\cap \Lambda_p$ is empty, then there will be no line meeting $p$ and $H'$. Hence, $\deg([Z]\cdot \sigma_{a}\sigma_{b})$ is the number of intersection points of $\Lambda$ with a general $H'$.

	Finally, the pre-image $f^{-1}(Q') = f^{-1}(H)$ is smooth for a general $H$ by Bertini's Theorem. If $\zeta$ is the class of a hyperplane section of $\schemeofsurfaces$ we have $f^*(\zeta) = \alpha + \beta$, , where $\alpha$ and $\beta$ are classes of hyperplane sections of $\schemeofsections{1}$ and $\schemeofsections{d}$, respectively. Since $\pr_2$ and $f$ have degree one, we compute
	\begin{align*}
	[\Lambda''] & = [\pr_2^{-1}\pr_2 f^{-1}(H)] 
%	& = \pr_2^*[\pr_2 f^{-1}(H)] \\
%	& = \pr_2^* \pr_{2,*}[f^{-1}(H)] \\
	 = \pr_2^*\pr_{2,*}f^*[H] 
%	& = \pr_2^* \pr_{2,*}(\alpha + \beta)^{\codim H} \\
%	& = {\binom{\codim H}{n}}\pr_2^* \beta^{\codim H -n} \\
	 = {\binom{\codim H}{n}}\beta^{\codim H -n}.
	\end{align*}

	Hence, by the push-pull formula:
	\begin{align*}
		\deg([\Lambda]\cdot H') = \deg([\Lambda'']\cdot(\alpha+\beta)^{\codim H'}) = \binom{\codim H}{n}\binom{\codim H'}{n}
		 &=\binom{a+1}{n}\binom{b+1}{n}.
	\end{align*}
	We then use Giambelli's formula to obtain Equation \cref{class-of-locus}.

	In case $n=3$ and $\dim Z$ even, we show that for $b=\dim Z/2$ we have
	$\deg ([Z]\cdot \sigma_{b,b})=0$. In this case, the hyperplanes $H$ and $H'$ have the same dimension $N-b-2$.

	For $p\in Q$, the set $\Lambda_p$ is defined as before.

	\begin{claim} for general $H$ of dimension $N-b-2$, we have
	$\dim(\Lambda_p\cap H)=1$. \end{claim}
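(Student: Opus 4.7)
My plan is to mimic the proof of the previous claim, with the ``bad'' dimensional threshold raised by one unit to match the fact that $H$ now has codimension $b+1$ rather than $a+1$.

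I would set up the incidence variety
\[
X \coloneqq \{(p, H) \in Q \times \mc{H} : \dim(H \cap \Lambda_p) \geq 2\} \subseteq Q \times \mc{H},
\]
with $\mc{H} \coloneqq \Gr(\dim H + 1, N)$, and aim to show $\dim X < \dim \mc{H}$. This implies that the projection $X \to \mc{H}$ is not dominant, whence for general $H$ no $p \in Q$ satisfies $\dim(\Lambda_p \cap H) \geq 2$, establishing the upper bound $\dim(\Lambda_p \cap H) \leq 1$.

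The dimension of $X$ is bounded via the fibers of $X \to Q$, exactly as in the previous claim. Over $p$, the fiber is a finite union of Schubert cycles $X'_p = \{H : \dim(H \cap \Lambda'_p) \geq 2\}$, indexed by the $\mathbb{P}^n$-components $\Lambda'_p$ of $\Lambda_p$. By the standard Schubert-codimension formula applied to the rank-$3$ (vector-space) condition $\dim(H \cap H_{n+1}) \geq 3$ in $\Gr(N - b - 1, N)$, each $X'_p$ is a Schubert cycle of type $\Sigma_{s,s,s}$ with $s = b - n + 3$, and thus has codimension $3(b - n + 3) = 3b$ when $n = 3$. Therefore $\dim X \leq \dim Q + \dim \mc{H} - 3b$, and the strict inequality $\dim X < \dim \mc{H}$ amounts to $\dim Q < 3b$, which is a simple numerical check under the hypotheses $n = 3$ and $b = \dim Z/2$ (with $\dim Z$ even).

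The matching lower bound $\dim(\Lambda_p \cap H) \geq 1$, needed for the equality in the claim, follows from the identification $\Lambda_p \cap H = \Lambda_p \cap Q'$ (which holds because $\Lambda_p \subseteq Q$), the containment $p \in \Lambda_p \cap Q'$ for $p \in Q'$, and a short dimension estimate using the positive dimension of $Q'$. The main obstacle lies in the Schubert-calculus bookkeeping for the rank-$3$ degeneracy — the index $s = b - n + 3$ in the new setting does not simply match the old index $\dim Q - b$, since the ambient Grassmannian has shifted — and in verifying the dimension inequality $\dim Q < 3b$ specifically under the $n = 3$ hypothesis, in parallel with the $3 - n \geq 0$ verification from the previous claim.
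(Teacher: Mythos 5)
Your upper bound $\dim(\Lambda_p\cap H)\le 1$ is sound and is essentially the paper's second step: one forms the incidence correspondence for the condition $\dim(H\cap\Lambda_p)\ge 2$, whose fibre over $p\in Q$ is a finite union of Schubert cycles of codimension $3(b-n+3)=3b$, and compares with $\dim Q=\dim Z-2=2b-2<3b$. (Your index $b-n+3$ is the correct one for the ambient Grassmannian $\Gr(N-b-1,N)$; the paper's stated codimension $3(\dim Q-b+1)$ does not agree with it, but the inequality survives either way.)

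The genuine gap is in the lower bound $\dim(\Lambda_p\cap H)\ge 1$, which you dismiss in one sentence. The three facts you invoke do not yield it: $\Lambda_p\cap H$ is a finite union of linear spaces $\Lambda'_p\cap H$ with $\Lambda'_p\isom\PP^n$ and $\codim H=b+1>n$, so each $\Lambda'_p\cap H$ has expected dimension $n-b-1<0$; even conditioning on the fact that it contains $p$ (for $p\in Q'$), the expected dimension is $0$, i.e.\ a priori the intersection is exactly $\{p\}$. The positive dimension of $Q'$ does not help, since $\Lambda_p\cap Q'$ is cut out inside $Q'$ by the condition of lying in $\Lambda_p$, which has large codimension in $Q$; nothing forces this locus to exceed $\{p\}$. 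This lower bound is precisely where the paper invests its effort: it introduces the correspondence $X=\{(p,H):\dim(H\cap\Lambda_p)\ge 1\}$ and argues that the projection $X\to\mc H$ is surjective by a dimension count --- the fibre of $X\to Q$ over $p$ has codimension $2(b-n+2)$ in $\mc H$, so $\dim X-\dim\mc H=\dim Q-2(b-n+2)=2n-6$, which is nonnegative exactly because $n=3$ (and $2b=\dim Z$). Some argument of this special-position type is unavoidable if you want the stated equality rather than just the inequality $\le 1$. (For what it is worth, the subsequent application --- showing $\dim\Lambda+\dim H'<\dim\schemeofsurfaces$ --- only uses the upper bound, but the claim as written asserts equality, and a proof of it must address both directions.)
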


	\begin{subproof}
	Define as before the closed subset $X\subseteq Q\times \mc{H}$ by
	$$X\coloneqq \{(p,H):\dim(H\cap \Lambda_p)\geq 1\}.$$
	The generic fiber of the projection map $\phi\from X\to \mc{H}$ is one-dimensional, hence we have
	$\dim \phi(X) = \dim(X)-1 = \dim \mc{H}$. The last equation holds with $n=3$ and $2b=\dim Z$. Hence for all $H\in \mc{H}$ we have $\dim (\Lambda_p \cap H)\geq 1$.

	On the other hand, the equality $\dim(\Lambda_p \cap H)=1$ is attained by some, and hence by a general, $H$. Indeed, Define the closed subset $X\subseteq Q\times \mc{H}$ by
	$$X\coloneqq \{(p,H):\dim(H\cap \Lambda_p)\geq 1\}.$$
	By a similar argument as before, one needs to show that
	$\dim(\mc{H})-\dim(X) + 1\geq 0$. The fiber $X_p$ is a Schubert cycle of codimension
	$3(\dim Q-b+1)$. Lastly, a computation shows
	$\dim(\mc{H})-\dim(\wtilde X)+1=\codim(\wtilde X_p)-\dim(Q)+1=\frac{1}{2}(2\dim Q + 18 - 5n)\geq 0$. \end{subproof}

	Now, define $\Lambda''$ as above. We have
	$\dim \Lambda''
	= \dim \schemeofsections{1} + \dim \pr_2 f^{-1}(Q')
	= b$.
	Since $f$ is generically of degree one, we still have $\dim \Lambda'' = \Lambda$, hence $\dim \Lambda + \dim H' = N-2 < \dim \schemeofsurfaces$. It follows that a generic $H'$ does not meet any of the lines $T\subset Z$, hence $\sigma_{b}\sigma_{b}\cdot [Z] = 0$.
	% To compute this number, we compute the coefficient $\lambda$ of the term of highest degree of the class
	% $[\Lambda][H']= f_* ([\Lambda''] \cdot (\alpha+\beta)^{a+1})$, where $\alpha$ and $\beta$ are classes of hyperplane sections of $\schemeofsections{1}$ and $\schemeofsections{3}$, respectively. Here, we used that for the class $\zeta$ of a hyperplane section of $\schemeofsurfaces$ we have $f^*(\zeta) = \alpha + \beta$. From the definition of $\Lambda''$ it follows that the term of highest degree of $[\Lambda'']$ is $\deg(Q')\beta^{21-a} = \deg(Q)\beta^{21-a}$
\end{proof}

\bibliographystyle{alpha}
\bibliography{masterarbeit-bib}{}

%\printbibliography[heading=bibintoc]

\end{document}